\documentclass[11pt]{amsart}

\usepackage{fullpage, graphicx, enumitem, mathtools,stmaryrd}
\SetSymbolFont{stmry}{bold}{U}{stmry}{m}{n}
\usepackage{url}
\usepackage{verbatim}
\usepackage{bbm}
\usepackage{amssymb,amsmath}
\usepackage{amsfonts,savesym,bm,amsthm}
\usepackage{xcolor}
\usepackage[mathscr]{eucal}
\usepackage[all]{xy}

\definecolor{hot}{RGB}{65,105,225}
\usepackage[pagebackref=true,colorlinks=true, linkcolor=hot ,  citecolor=hot, urlcolor=hot]{hyperref}

\newtheorem{theorem}{Theorem}[section]
\newtheorem{lemma}[theorem]{Lemma}
\newtheorem{conjecture}[theorem]{Conjecture}
\newtheorem{theorem-definition}[theorem]{Theorem-Definition}
\newtheorem{corollary}[theorem]{Corollary}
\newtheorem{proposition}[theorem]{Proposition}
\newtheorem{prop}[theorem]{Proposition}

\newtheorem{definition-theorem}[theorem]{Definition-Theorem}

\newtheorem{theorem-defintion}[theorem]{Theorem-Definition}
\newtheorem{corollary-definition}[theorem]{Corollary-Definition}
\newtheorem{definition-proposition}[theorem]{Definition-Proposition}

\theoremstyle{definition}

\newtheorem{definition}[theorem]{Definition}

\newtheorem{remark}[theorem]{Remark}

\newtheorem{subs}[theorem]{ }

\numberwithin{equation}{section}

\def\be{\begin{equation}}
	\def\ee{\end{equation}}

\title{A combinatorial criterion for detecting roots of Bernstein-Sato polynomials}

\author{Quan Shi}
\address{Department of Mathematical Sciences, Tsinghua University, Beijing, 100084, P. R. China.}
\email{shiq24@mails.tsinghua.edu.cn / thusq20@gmail.com}

\author{Huaiqing Zuo}
\address{Department of Mathematical Sciences, Tsinghua University, Beijing, 100084, P. R. China.}
\email{hqzuo@mail.tsinghua.edu.cn}

\begin{document}

	\begin{abstract}
		We give a combinatorial criterion for detecting roots of Bernstein-Sato polynomials of ideals.
		As applications, we prove the strong monodromy conjecture for determinantal and Pfaffian ideals.
		
		Keywords. Bernstein-Sato polynomial, monodromy conjecture, determinantal ideal, Pfaffian ideal.
		
		MSC(2020). Primary 14F10; Secondary 13A35, 13C40, 32S40.
	\end{abstract}
	
	\maketitle
	\tableofcontents

	\section{Introduction}
	
	Let $X$ be a smooth complex affine algebraic variety and let $\mathfrak a$ be a nonzero coherent
	ideal sheaf of $\mathcal O_X$. Let $f_1,\ldots,f_r$ be nonzero generators of $\mathfrak a$.
	The Bernstein-Sato polynomial of $\mathfrak a$, denoted by $b_{\mathfrak a}(s)$,
	is the monic polynomial in one variable of least degree such that
	\begin{equation}
		b_{\mathfrak a}(s_1+\cdots+s_r) f_1^{s_1}\cdots f_r^{s_r} \in \sum_{\bm u\in \mathbb Z^r, u_1+\cdots +u_r = 1}\mathscr D_X[s_1,\ldots,s_r] \cdot \Bigl(\prod_{u_i < 0}\binom{s_i}{-u_i}\Bigr) f_1^{s_1+u_1}\cdots f_r^{s_r+u_r}, \label{BS-intro}
	\end{equation}
	where $\mathscr D_X[s_1,\ldots,s_r]$ acts on $f_1^{s_1}\cdots f_r^{s_r}$ in the usual way.
	The polynomial $b_{\mathfrak a}(s)$ depends only on $\mathfrak a$, not on the choice of
	generators; see \cite{BMS06}. It is a fundamental invariant in $\mathscr D$-module theory,
	and its roots encode information about the singularities of $\mathfrak a$.
	
	In this paper, we give a combinatorial criterion for detecting these roots.

	Let $B=\mathbb Z[y_1,\ldots,y_n]$ and let $<$ be a monomial order on $B$.
	For a nonzero polynomial $F\in B$, let
	$\mathrm{in}(F)=c\cdot y_1^{a_1}\cdots y_n^{a_n}$ be its largest nonzero term with
	respect to $<$, and set $\mathrm{inv}(F):=(a_1,\ldots,a_n)\in\mathbb Z^n$.
	Let $\mathrm{mult}(F)$ denote the multiplicity of $F$ at $(y_1,\ldots,y_n)$. Equivalently, it is the least total degree of a monomial
	occurring in $F$.
	\begin{theorem}\label{general_statement}
		Let $B=\mathbb Z[y_1,\ldots,y_n]$, let $\mathfrak a$ be an ideal of $B$, and let $<$
		be a monomial order on $B$. Let $d\in\mathbb Z_{>0}$. Suppose that $\mathfrak a$ has
		generators $f_1,\ldots,f_r$ such that
		\begin{enumerate}
			\item $\mathrm{mult}(f_i) \geq d$ for all $i=1,\ldots,r$.\label{cA}
			\item There exists $r' \leq r$ such that $\deg \mathrm{in}(f_i) = d$ for all $i=1,\ldots,r'$, and the lattice $L = \sum_{i=1}^{r'} \mathbb Z \cdot \mathrm{inv}(f_i) \subset \mathbb Z^n$ has rank $n$.\label{cB}
		\end{enumerate} 
		Then $-\frac{n}{d}$ is a root of $b_{\mathfrak a}(s)$.
	\end{theorem}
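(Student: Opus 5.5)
The plan is to reduce modulo large primes and use the link between Bernstein--Sato roots and $\nu$-invariants (F-thresholds) due to Musta\c{t}\u{a}--Takagi--Watanabe and Blickle--Musta\c{t}\u{a}--Smith. For an ideal $\mathfrak a$ defined over $\mathbb Z$ and an ideal $J\supseteq\mathfrak a$ with $\mathfrak a\subseteq\sqrt J$, set
\[
\nu^J_{\mathfrak a}(p^e)=\max\{r:\ \mathfrak a_p^r\not\subseteq J_p^{[p^e]}\}.
\]
For $p\gg0$ one has $b_{\mathfrak a}(\nu^J_{\mathfrak a}(p^e))\equiv 0 \pmod p$. If we can exhibit infinitely many primes $p$ (with some $e$) for which $\nu^J_{\mathfrak a}(p^e)\equiv -n/d \pmod p$, then $b_{\mathfrak a}(-n/d)$, which is a rational number with bounded denominator, is divisible by infinitely many primes. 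It would therefore vanish. I take $J=\mathfrak m=(y_1,\dots,y_n)$ and primes $p$ with $q=p^e\equiv 1 \pmod d$ (for instance $e=1$ and $p\equiv1 \pmod d$). The goal is to show $\nu^{\mathfrak m}_{\mathfrak a}(q)=n(q-1)/d$ exactly, which is $\equiv -n/d \pmod p$.

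\textbf{Upper bound.} This step uses condition \eqref{cA}. Every generator has multiplicity $\ge d$, so $\mathfrak a^r\subseteq\mathfrak m^{dr}$. By pigeonhole, $\mathfrak m^{N}\subseteq\mathfrak m^{[q]}$ as soon as $N>n(q-1)$. Hence $\mathfrak a^r\subseteq \mathfrak m^{[q]}$ whenever $dr>n(q-1)$, which gives $\nu\le n(q-1)/d$. This step is routine.

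\textbf{Lower bound.} This step uses condition \eqref{cB}. Since $\mathfrak m^{[q]}$ is a monomial ideal, a polynomial lies outside it as soon as one of its terms does. For a product $F=\prod_{i\le r'} f_i^{k_i}$ the initial term is $\prod \mathrm{in}(f_i)^{k_i}$. Its coefficient $\prod c_i^{k_i}$ is nonzero mod $p$ for $p$ large. Its exponent is $\sum k_i\,\mathrm{inv}(f_i)$, of total degree $d\sum k_i$. So it suffices to find nonnegative integers $k_i$ with
\[
\sum_i k_i\,\mathrm{inv}(f_i)=(q-1)(1,\dots,1),\qquad \sum_i k_i=n(q-1)/d .
\]
The degree condition is automatic because each $\mathrm{inv}(f_i)$ has degree $d$. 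The rank-$n$ hypothesis gives $M(1,\dots,1)\in L$ for a suitable $M$, which depends only on $L$. For $q\equiv1$ modulo a fixed modulus, this puts $(q-1)(1,\dots,1)$ in $L$.

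\textbf{Main obstacle.} The hard part is nonnegativity, that is, getting $(q-1)\mathbf 1$ into the semigroup generated by the $\mathrm{inv}(f_i)$ rather than only the lattice. By standard semigroup arguments this follows for $q\gg0$ once $\mathbf 1$ lies in the interior of the rational cone spanned by the $\mathrm{inv}(f_i)$. That is not literally implied by the hypotheses as I read them.

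I would first try to show that the cone condition is unnecessary, by replacing $J=\mathfrak m$ with a shifted monomial colon ideal $J=\mathfrak m^{[q]}:y^{w}$. Equivalently, I would allow a target $(q-1)\mathbf 1-w$ for a bounded vector $w$ in the cone. This trades exactness for $\nu=n(q-1)/d-c$ with $c$ bounded. The issue is then that the congruence class shifts by $c$. To recover $-n/d$ itself, I would exploit the freedom to vary both $w$ and the generators whose initial monomials lie on the boundary of the cone. I would also use the fact that $\mathfrak a^r\subseteq \mathfrak m^{dr}$ holds with equality on the lowest-degree parts. The aim is a $J$ for which the upper bound also shifts by exactly $c$. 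If that fails, I would fall back on checking that the cone condition does hold in the intended applications, and prove the theorem under it.
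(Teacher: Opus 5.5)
There is a genuine gap, and it sits exactly where you flagged it; neither of your proposed ways around it works. Your reduction to $\nu$-invariants and your pigeonhole upper bound are fine.

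\textbf{Why $J=\mathfrak m$ cannot work.} Your intermediate claim $\nu^{\mathfrak m}_{\mathfrak a}(q)=n(q-1)/d$ is false under the hypotheses. Take $\mathfrak a=(y_1^3,y_1^2y_2)$, so $n=2$, $d=3$, and $\mathrm{inv}(f_i)=(3,0),(2,1)$ span a rank-$2$ lattice. Then $\mathfrak a^k=y_1^{2k}\mathfrak m^k$, hence $\nu^{\mathfrak m}_{\mathfrak a}(p)=(p-1)/2\equiv-1/2$, not $-2/3$.

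\textbf{Why the colon-ideal repair fails.} A bounded shift $w$ is not divisible by $p$, so it moves the residue by $|w|/d$, as you noticed. Moreover $\mathfrak m^{[q]}:y^w$ is not a Frobenius power $J^{[q]}$, so the congruence $b_{\mathfrak a}(\nu^J_{\mathfrak a}(p^e))\equiv0$ does not even apply to it.

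\textbf{Why the fallback fails.} The cone condition does not hold in the intended application. For determinantal ideals, write $\mathbf 1=\sum\alpha_Id_I+\sum\beta^J_{ij}g^J_{ij}$ with nonnegative coefficients. The off-diagonal entries force $\sum\beta^J_{ij}=n(m-1)$. Hence the total diagonal entry is at least $(r-1)n(m-1)>n$, since $m\ge3$. So $\mathbf 1\notin C$, and proving the theorem only under the cone condition would not give Theorem~\ref{ThmA}.

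\textbf{The missing idea.} Shift by a multiple of $p$, i.e.\ change $J$ \emph{before} taking the Frobenius power.
\begin{itemize}
\item Take $J=(y_1^{A_1},\dots,y_n^{A_n})$ with $\mathbf A=M\sum_{i\le r'}\mathrm{inv}(f_i)+\mathbf 1$ and $M\gg0$. Then $\mathbf A$ lies in the interior of $C=\sum_{i\le r'}\mathbb R_{\ge0}\,\mathrm{inv}(f_i)$, which is nonempty because $L$ has rank $n$. Also $\mathbf A-\mathbf 1\in L$.
\item Now $J^{[p]}=(y_i^{pA_i})$. The same pigeonhole argument gives $\nu^J_{\mathfrak a}(p)\le(pA_{\mathrm{tot}}-n)/d$, where $A_{\mathrm{tot}}=\sum_iA_i$. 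The extremal exponent is the box corner $p\mathbf A-\mathbf 1$ instead of $(p-1)\mathbf 1$.
\item Since $\mathbf A$ is interior, $p\mathbf A-\mathbf 1-c_0\in\mathrm{int}(C)$ for $p\gg0$. Here $c_0\in S$ is chosen with $c_0+(C\cap L)\subseteq S$.
\item Choose $k$ with $(k\mathbb Z)^n\subseteq L$ and restrict to primes $p\equiv1\pmod{kd}$. Then $p\mathbf A-\mathbf 1=pM\sum\mathrm{inv}(f_i)+(p-1)\mathbf 1\in L$, so $p\mathbf A-\mathbf 1\in c_0+(C\cap L)\subseteq S$. Also $d\mid pA_{\mathrm{tot}}-n$.
\item Your initial-term argument now gives $\nu^J_{\mathfrak a}(p)=(pA_{\mathrm{tot}}-n)/d$. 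This is still $\equiv-n/d\pmod p$, because all dependence on $\mathbf A$ is multiplied by $p$. Dirichlet's theorem supplies infinitely many such primes.
\end{itemize}
In the toy example, $M=1$ already works: $\mathbf A=(6,2)$ gives $\nu^J_{\mathfrak a}(p)=(8p-2)/3\equiv-2/3$ for $p\equiv1\pmod3$.
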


	As applications of Theorem~\ref{general_statement}, we prove the strong monodromy conjecture
	for determinantal and Pfaffian ideals. This conjecture relates poles of motivic zeta functions,
	monodromy support loci, and roots of Bernstein-Sato polynomials.
	The motivic zeta function $Z_{\mathfrak a}^{\mathrm{mot}}(s)$ is the normalized generating
	series of classes of contact loci of $\mathfrak a$ in the Grothendieck ring of varieties.
	It is rational and can be computed using any log resolution of $\mathfrak a$; see \cite{DL98}.
	The monodromy support $S(\mathfrak a)$, also called the set of Verdier monodromy eigenvalues,
	generalizes the set of monodromy eigenvalues of local Milnor fibrations; see \cite{PV10}.

	\begin{conjecture}[strong monodromy conjecture]
		Let $X$ be a smooth algebraic variety and let $\mathfrak a$ be a coherent ideal sheaf
		of $\mathcal O_X$. Let $P(Z_{\mathfrak a}^{\mathrm{mot}})$ be the set of poles of
		$Z_{\mathfrak a}^{\mathrm{mot}}(s)$, and let $Z(b_{\mathfrak a})$ be the set of roots
		of $b_{\mathfrak a}(s)$. Then:
		\begin{enumerate}
			\item (Weak)\, $\exp(P(Z_{\mathfrak a}^{\mathrm{mot}})) \subset S(\mathfrak a)$, where $\exp : \mathbb C \to \mathbb C^*, s \mapsto e^{2\pi i s}$ is the exponential map.
			\item (Strong)\, $P(Z_{\mathfrak a}^{\mathrm{mot}}) \subseteq Z(b_{\mathfrak a})$.
		\end{enumerate}
	\end{conjecture}
	By the Kashiwara-Malgrange theorem, $\exp(Z(b_{\mathfrak a}))=S(\mathfrak a)$,
	so the strong conjecture implies the weak conjecture.

	Our applications to determinantal and Pfaffian ideals are as follows.
	\begin{theorem}\label{ThmA}
		Let $1 \leq r\leq n \leq m$, $R = \mathbb Z[x_{ij}]_{1\leq i\leq n,1\leq j\leq m}$, and $\mathfrak a_r$ be the ideal of $R$ generated by all $r\times r$-minors of $(x_{ij})_{i,j}$. Then the strong monodromy conjecture holds for $\mathfrak a_r$.
	\end{theorem}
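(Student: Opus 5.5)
We prove Theorem~\ref{ThmA} by comparing two known sets and using Theorem~\ref{general_statement} to bridge the gap. First, the poles of $Z^{\mathrm{mot}}_{\mathfrak a_r}(s)$ come from Docampo's computation of the motivic zeta function of determinantal ideals via the $GL_n\times GL_m$-orbit decomposition of the arc space. The candidate poles are
\[
-\frac{(n-i)(m-i)}{r-i}, \qquad i=0,\ldots,r-1,
\]
one for each rank stratum $\{\operatorname{rank}\le i\}$. Second, the roots of $b_{\mathfrak a_r}(s)$ are known explicitly by Budur's Bernstein--Sato computation for determinantal ideals (the maximal-minor case goes back to earlier work). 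It would then suffice to check directly that each pole lies in that root set. To keep the argument self-contained and uniform, however, we instead show that each candidate pole is a root using Theorem~\ref{general_statement}, localized at a point of the corresponding stratum.

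Fix $i<r$ and a point $p$ of rank exactly $i$, say the matrix with $I_i$ in the upper-left corner and zeros elsewhere. Near $p$ the upper-left $i\times i$ block is invertible. By row and column operations (a local analytic, indeed étale-local, change of coordinates), $\mathfrak a_r$ becomes locally the product of a smooth factor of dimension $i(n+m-i)$ with the ideal $\mathfrak a_{r-i}$ of $(r-i)$-minors of a generic $(n-i)\times(m-i)$ matrix $Y$ of Schur-complement coordinates. Bernstein--Sato polynomials are local, and they are insensitive to the smooth factor and to base change to $\mathbb C$. Hence every root of the local $b$-function of $\mathfrak a_{r-i}$ at $Y=0$ is a root of $b_{\mathfrak a_r}$. (We use that $b_{\mathfrak a}$ is the lcm of local $b$-functions, and that the $\mathbb Z$-defined ideal has the same $b$-function over $\mathbb C$.)

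So we reduce to showing that $-\frac{NM}{k}$ is a root of $b_{\mathfrak a_k}$ for the generic $N\times M$ matrix, $k\le N\le M$. We apply Theorem~\ref{general_statement} with $n=NM$ variables $y_{ab}$ and $d=k$:
\begin{itemize}
\item Condition (\ref{cA}) holds because every $k$-minor is homogeneous of degree $k$.
\item For condition (\ref{cB}), take a diagonal term order, e.g. lex with $y_{11}>y_{12}>\cdots$. Then $\mathrm{in}$ of a minor with rows $a_1<\dots<a_k$ and columns $b_1<\dots<b_k$ is the main-diagonal monomial $\prod_j y_{a_jb_j}$, so each $\mathrm{inv}$ is the indicator vector of a "diagonal" set of $k$ cells.
\end{itemize}
The main obstacle is showing that these diagonal indicator vectors span a lattice of rank $NM$, i.e. that they span $\mathbb Q^{NM}$ (rank over $\mathbb Z$ equals the $\mathbb Q$-dimension of the span).

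We handle this by a differencing argument. Take two increasing chains that differ only in one cell, where the common part can be placed consistently. Their difference is $e_{ab}-e_{a'b'}$. This needs room: with $k\le N\le M$ and $k\ge 1$, for any cell one can route a chain of length $k$ through it. For $k<N$ one can vary a single cell freely while fixing the others. The boundary case $k=N$, where all rows are used, needs separate care. There one shows that the differences $e_{ab}-e_{ab'}$ within a row, together with the cross-row relations, generate the sum-zero sublattice on appropriate blocks. One single chain then adds a vector of nonzero total sum, giving full rank. If $k=N=M$ there is only one minor; then $NM/k=N$, and the root $-N$ of $\det$ is classical (Cayley's identity), so that case is handled directly.

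Finally, we check that every pole of $Z^{\mathrm{mot}}_{\mathfrak a_r}$ appears among $-\frac{(n-i)(m-i)}{r-i}$, which is true by Docampo's formula. With the two steps above, this completes the proof of the strong monodromy conjecture for $\mathfrak a_r$.
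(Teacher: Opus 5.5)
\textbf{There is a genuine gap: with a diagonal term order, condition~\eqref{cB} fails.}

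Your localization step is sound. It is the paper's reduction (Lemma~\ref{local_det} iterated). Where the upper-left $i\times i$ block is invertible, $\mathfrak a_r$ becomes the ideal of $(r-i)$-minors of the Schur complement, and the codimensions agree. So it suffices to show that $-\frac{NM}{k}$ is a root of $b_{\mathfrak a_k}$. Your first paragraph's claim that the roots of $b_{\mathfrak a_r}$ are already known explicitly is unfounded; if they were, the theorem would be immediate. You do not use it, so it does no harm.

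\textbf{Why the diagonal order fails for $k<N$.} For a diagonal term order, $\mathrm{inv}$ of each $k$-minor is the indicator of an increasing chain $(a_1,b_1),\dots,(a_k,b_k)$. Your claim that every cell lies on such a chain is false. No cell lies strictly north-west of $(N,1)$ (that would need column $<1$), and none lies strictly south-east of it (that would need row $>N$). So for $k\geq 2$ neither $(N,1)$ nor, symmetrically, $(1,M)$ lies on any chain. For example, with $N=M=3$ and $k=2$, every vector of $L$ has $\lambda_{31}=\lambda_{13}=0$, so $\mathrm{rank}\,L\leq 7<9$. No differencing argument can repair this. Choosing other generators does not help either, since the minors are a Gr\"obner basis for diagonal orders. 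The order itself must change.

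\textbf{The missing idea} is the paper's order: compare first the number $\deg_0$ of diagonal variables $x_{ll}$, then break ties lexicographically. With this order, each principal minor $D_I$ has initial term $\prod_{l\in I}x_{ll}$. The minor $G^J_{ij}=\det X_{J\cup\{i\},J\cup\{j\}}$ (with $|J|=r-1$, $i\notin J$, $j\notin J\cup\{i\}$) has initial term $\pm x_{ij}\prod_{l\in J}x_{ll}$, because this is its unique term with $r-1$ diagonal variables and none has $r$. Hence every off-diagonal cell occurs. Moreover $e_{ii}-e_{jj}=d_{I_0\cup\{i\}}-d_{I_0\cup\{j\}}\in L$, which needs $r<n$. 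Together these give $\mathbb Z^{nm}/L\cong\mathbb Z/r$ (Lemma~\ref{ful_det}).

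\textbf{The case $k=N<M$ cannot be done this way at all.} You need this case whenever $r=n$, and Theorem~\ref{general_statement} cannot produce it for $N\geq 2$, whatever the monomial order. Every term of every maximal minor contains exactly one variable from each row. The same holds for the degree-$N$ component of any element of $\mathfrak a_N$, which is where any initial term of degree $N$ comes from. Hence
\[
L\subseteq\Bigl\{\lambda : \sum_b\lambda_{1b}=\cdots=\sum_b\lambda_{Nb}\Bigr\},
\]
a lattice of rank $NM-N+1<NM$. Your ``separate care'' via within-row and cross-row differences therefore cannot reach full rank. Cayley's identity covers only $N=M$. For $N<M$ you must import the maximal-minor computation of \cite{Bernstein_for_maximal_minors}, which is exactly what the paper does for $r=n$.
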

	In this case, the poles of the motivic zeta function are known; see \cite{Roi,CZ25}.
	They are
	\begin{displaymath}
		\{-\frac{(m-j)(n-j)}{r-j} \mid 0\leq j \leq r-1\}.
	\end{displaymath}
	By induction, Theorem~\ref{ThmA} reduces to proving
	$b_{\mathfrak a_r}(-\frac{mn}{r})=0$; see \ref{reduction}.
	A similar reduction applies to Pfaffian ideals, for which we prove the following result.
	\begin{theorem}\label{ThmB}
		Let $1\leq 2r \leq N$, $R' := \mathbb Z[z_{ij}]_{1\leq i,j\leq N} / (z_{ij}+z_{ji},z_{ii}| 1\leq i,j \leq N) = \mathbb Z[z_{ij}]_{1\leq i<j\leq N}$, and $P_r^N$ be the ideal of $R'$ generated by all $2r \times 2r$-Pfaffians of $(z_{ij})_{i,j}$. Then the strong monodromy conjecture holds for $P_r^N$.
	\end{theorem}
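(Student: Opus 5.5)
The plan is to follow the same reduction that the introduction sketches for Theorem~\ref{ThmA}. First recall the poles of $Z^{\mathrm{mot}}_{P_r^N}$. By the orbit-by-orbit computation for skew-symmetric matrices (analogous to \cite{Roi,CZ25}), these poles should be
\[
\Bigl\{-\tfrac{\binom{N-2j}{2}}{r-j}\ \Bigm|\ 0\le j\le r-1\Bigr\}.
\]
Near a skew matrix of rank $2j$, after a change of coordinates by an element of the stabilizer, the ideal $P_r^N$ becomes the pullback of $P_{r-j}^{N-2j}$ along a smooth morphism. Since roots of local Bernstein-Sato polynomials are roots of the global one, and $b$ is unchanged by smooth pullback, induction on $N$ reduces everything to the single claim
\[
b_{P_r^N}\Bigl(-\tfrac{\binom N2}{r}\Bigr)=0 .
\]
This is the analogue of \ref{reduction}.

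To prove this claim I would apply Theorem~\ref{general_statement} with $B=R'$, $n=\binom N2$ and $d=r$. Condition \eqref{cA} holds for any homogeneous degree-$r$ generators, for instance the $2r\times 2r$ Pfaffians $\mathrm{Pf}_I$ with $|I|=2r$, or their images under $GL_N(\mathbb Z)$ acting by $Z\mapsto g^TZg$; these images still lie in the ideal. Take the lex order with the variables $z_{ij}$, $i<j$, ordered lexicographically in $(i,j)$. Then $\mathrm{in}(\mathrm{Pf}_I)=\pm z_{i_1i_2}z_{i_3i_4}\cdots z_{i_{2r-1}i_{2r}}$, the greedy matching of $I=\{i_1<\dots<i_{2r}\}$. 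For condition \eqref{cB}, one would then try to show that the indicator vectors of the relevant matchings span $\mathbb Z^{\binom N2}$.

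\textbf{Main obstacle.} This step has a counting problem. Every $f$ in the ideal with $\mathrm{mult}(f)\ge r$ and $\deg\mathrm{in}(f)=r$ has its initial monomial in the degree-$r$ part of $\mathrm{in}(P_r^N)$. That part has rank $\binom N{2r}$, because the degree-$r$ piece of the ideal is spanned by the Pfaffians. Hence the lattice in \eqref{cB} has rank at most $\binom N{2r}$, whatever generators or (integral, unimodular) coordinates are chosen. This is fine when $r=1$, where the $z_{ij}$ themselves give the standard basis. It fails, for example, when $N=2r$: then the ideal is principal, generated by $\mathrm{Pf}$. In that case I would instead use the Cayley-type identity
\[
\mathrm{Pf}(\partial)\,\mathrm{Pf}^{s+1}=(s+1)(s+3)\cdots(s+2r-1)\,\mathrm{Pf}^{s},
\]
which directly gives the root $-(2r-1)=-\binom{2r}{2}/r$. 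It also fails, for example, for $(N,r)=(5,2)$, where $\binom N{2r}=5<10=\binom N2$.

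For such cases I would first look for a way to shrink $n$ without changing the target root. One option is to replace $P_r^N$ by an ideal with the same $b$-function, obtained by slicing by a transversal linear subspace on which $-\binom N2/r$ still appears as $-n'/d'$. Another option is to run Theorem~\ref{general_statement} with $d$ a multiple of $r$, using products of Pfaffians as generators, together with a comparison between the roots of $b_{\mathfrak a^k}$ and those of $b_{\mathfrak a}$. I expect that closing this gap, that is, producing a full-rank lattice or an alternative certificate for the root in the cases where $\binom N{2r}<\binom N2$, will be the hardest part of the argument.
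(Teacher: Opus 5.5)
\textbf{There is a genuine gap: condition~\eqref{cB} is never verified in the main range $2\le r<\lfloor N/2\rfloor$.}

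What you do have right: your reduction to the single root $-\binom N2/r$ is the paper's (local product structure on $\{z_{12}\neq0\}$ plus induction, with the poles taken from \cite{CSXZ25}). Your counting bound is also correct. If $\mathrm{mult}(f)\ge r$ and $\deg\mathrm{in}(f)=r$, then $\mathrm{in}(f)=\mathrm{in}(f_r)$ for the degree-$r$ component $f_r$, so $\mathrm{rank}\,L\le\binom N{2r}$.

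The order you suggest cannot establish \eqref{cB}. With lex order every initial term is a consecutive matching. The pair $\{i,j\}$ can be the $k$-th block of such a matching only if $i-1\ge 2k-2$ and $N-j\ge 2r-2k$. For $(N,r)=(6,2)$ the pairs $\{1,5\},\{1,6\},\{2,5\},\{2,6\}$ therefore never occur, so $\mathrm{rank}\,L\le 11<15$.

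The missing idea is the paper's choice of order and generators. Fix the pairs $\{a,a+1\}$ for $a\in T=\{1,3,\ldots,2h-1\}$. Compare monomials first by $\deg_0'$, the total exponent of the $z_{a,a+1}$. Take the Pfaffians $H_I$ on $I\cup(I+1)$ and $Q_{ij}^J$ on $J\cup(J+1)\cup\{i,j\}$. The matching that uses the $r-1$ pairs of $J$ is the unique maximizer of $\deg_0'$, so $\mathrm{in}(Q_{ij}^J)=\pm z_{ij}\prod_{a\in J}z_{a,a+1}$. The argument of Lemma~\ref{ful_det} then goes through with $z_{a,a+1}$ in place of $x_{ll}$:
\begin{itemize}
\item the differences $h_{I_0\cup\{a\}}-h_{I_0\cup\{b\}}$ identify all classes of $\tilde e_{a,a+1}$ with one class $z$ satisfying $rz=0$;
\item the vectors $q_{ij}^J$ give $\tilde e_{ij}\equiv z$;
\item both choices of $I_0$ and $J$ need $h-2\ge r-1$, i.e.\ $r<h$.
\end{itemize}
Hence $L=\{\lambda\mid r \text{ divides } \sum\lambda_{ij}\}$ has full rank.

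\textbf{Your ``main obstacle'' is misplaced.} For $r\ge2$ one has $\binom N{2r}<\binom N2$ only when $2r\in\{N-1,N\}$, i.e.\ $r=\lfloor N/2\rfloor$. The paper never applies Theorem~\ref{general_statement} there; it cites known results instead. The case $N=2r+1$ is the sub-maximal Pfaffian case of \cite{Bernstein_for_maximal_minors}, and $N=2r$ follows from $b_{P_r^{2r}}(s)=(s+1)(s+3)\cdots(s+2r-1)$ \cite{CSS13}. Your slicing and power-of-ideal detours are therefore unnecessary.

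As written, though, your proposal has no argument for $N=2r+1$. For $N=2r$, the Cayley identity alone only shows that $b_{P_r^{2r}}$ divides the product. That $-(2r-1)$ is actually a root requires the equality.
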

	Recall the strong monodromy conjecture for determinantal ideals (resp. Pfaffian ideals) was already known when $r = n$ (resp. $r = \lfloor N/2 \rfloor$), cf. \cite{Bernstein_for_maximal_minors,CSS13}, while general cases remained open. The weak monodromy conjecture for determinantal and Pfaffian ideals was proved in
	\cite{CZ25} and \cite{CSXZ25}, respectively, using a detailed analysis of monodromy zeta
	functions. Our proof of the strong monodromy conjecture uses a different approach,
	based on a characteristic $p$ criterion of Musta\c{t}\u{a}, Takagi, and Watanabe \cite{MTW}.

	\begin{remark}[The use of AI]
		This work was assisted by AI. More precisely, ChatGPT Pro suggested an approach to
		proving Theorem~\ref{ThmA} when $(n,m,r)=(n,n,n-2)$. We corrected the argument and
		generalized it to obtain Theorem~\ref{general_statement}. All AI-generated content
		has been checked by the authors.
	\end{remark}
	\subs{\bf Acknowledgement.} Q. Shi and H. Zuo were supported by BJNSF Grant 1252009. Q. Shi was supported by NSFC Grant 125B2004. H. Zuo was supported by NSFC Grant 12271280.
	
		 %Throughout the paper, we will stick to the notation $r,m,n,R,\mathfrak a$.
	
	\section{Preliminaries}
	
	\subs{\bf Bernstein-Sato polynomial of an ideal.}\label{Bernstein-Sato}
	Let $X=\mathrm{Spec}\,B$ be a smooth complex affine variety, and let $\mathfrak a$ be an
	ideal of $B$ with nonzero generators $f_1,\ldots,f_r$. Then $b_{\mathfrak a}$ is the monic
	polynomial in one variable of least degree such that
	\begin{equation}
		b_{\mathfrak a}(s_1+\cdots+s_r) f_1^{s_1}\cdots f_r^{s_r} \in \sum_{\bm u\in \mathbb Z^r, u_1+\cdots+u_r = 1}\mathscr D_X[s_1,\ldots,s_r] \cdot \Bigl(\prod_{u_i < 0}\binom{s_i}{-u_i}\Bigr) f_1^{s_1+u_1}\cdots f_r^{s_r+u_r}, \label{BS}
	\end{equation}
	where $\mathscr D_X[s_1,\ldots,s_r]$ acts on $f_1^{s_1}\cdots f_r^{s_r}$ in the usual way.
	Replacing $X$ with an open affine subset $U\subset X$ in \eqref{BS} defines
	$b_{\mathfrak a|_U}(s)$, which divides $b_{\mathfrak a}(s)$. For $p\in X$, we define
	$b_{\mathfrak a,p}(s):=\mathrm{gcd}_{U\ni p}b_{\mathfrak a|_U}(s)$, where $U$ ranges
	over all open affine neighborhoods of $p$. This is the local Bernstein-Sato polynomial
	of $\mathfrak a$ at $p$.
	
	\begin{remark}\label{b-fun_var}
		Let $Z:=V(\mathfrak a)$ be the closed subscheme defined by $\mathfrak a$.
		By \cite[2.5]{BMS06}, $b_Z(s):=b_{\mathfrak a}(s-\mathrm{codim}_XZ)$ depends only
		on $Z$, not on its embedding into an affine space. Moreover, for a smooth affine
		variety $T$, \cite[Theorem 5]{BMS06} gives $b_Z(s)=b_{Z\times T}(s)$.
	\end{remark}

	\subs{\bf Reduction of Theorems~\ref{ThmA} and~\ref{ThmB}.}\label{reduction}
	Set $R_{n,m}:=\mathbb C[x_{ij}]_{1\leq i\leq n,1\leq j\leq m}$.
	Let $I_r^{n,m}$ be the ideal generated by all $r$-minors of the generic $n\times m$
	matrix, and let $Z_r^{n,m}$ be the closed subvariety defined by $I_r^{n,m}$.
	The closed points of $Z_r^{n,m}$ correspond to matrices of rank at most $r-1$.
	Let $U=\{x_{11}\neq0\}\subset X_{n,m}:=\mathrm{Spec}\,R_{n,m}$.
	The argument of \cite[Lemma 2.3]{Bernstein_for_maximal_minors} gives the following lemma.
	\begin{lemma}\label{local_det}
		Suppose $r \geq 2$. There exists an isomorphism of algebraic varieties
		\begin{displaymath}
			\pi : U \cap Z_{r}^{n,m} \overset{\simeq}{\longrightarrow} \mathbb C^* \times \mathbb C^{m-1} \times \mathbb C^{n-1} \times Z_{r-1}^{n-1,m-1}.
		\end{displaymath}	
		In particular, since $\mathrm{codim}_{X_{n,m}} Z_{r}^{n,m} = \mathrm{codim}_{X_{n-1,m-1}}Z_{r-1}^{n-1,m-1}$, we have $b_{I_{r-1}^{n-1,m-1}}(s)| b_{I_{r}^{n,m}}(s)$.
	\end{lemma}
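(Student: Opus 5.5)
The plan is to construct the isomorphism $\pi$ by row and column reduction using the invertible pivot $x_{11}$, and then deduce the divisibility from Remark~\ref{b-fun_var}.

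\textbf{The map.} On $U$ write the matrix as $A=\begin{pmatrix} a & v\\ w & M\end{pmatrix}$ with $a=x_{11}\neq 0$, $v\in\mathbb C^{m-1}$ a row vector, $w\in\mathbb C^{n-1}$ a column vector and $M$ an $(n-1)\times(m-1)$ block. Define
\[
\pi(A)=\bigl(a,\,v,\,w,\,M-a^{-1}wv\bigr).
\]
This is an isomorphism $U\to\mathbb C^*\times\mathbb C^{m-1}\times\mathbb C^{n-1}\times X_{n-1,m-1}$, with inverse $(a,v,w,M')\mapsto\begin{pmatrix} a& v\\ w& M'+a^{-1}wv\end{pmatrix}$. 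It remains to see that $\pi$ carries $U\cap Z_r^{n,m}$ onto $\mathbb C^*\times\mathbb C^{m-1}\times\mathbb C^{n-1}\times Z_{r-1}^{n-1,m-1}$ scheme-theoretically, i.e.\ that $I_r^{n,m}R_{n,m}[x_{11}^{-1}]$ is generated by the $(r-1)$-minors of $M'=M-a^{-1}wv$.

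\textbf{The ideal identity.} Left and right multiplication by the unipotent matrices $\begin{pmatrix}1&0\\-a^{-1}w&I\end{pmatrix}$ and $\begin{pmatrix}1&-a^{-1}v\\0&I\end{pmatrix}$, whose entries lie in $R_{n,m}[x_{11}^{-1}]$, turns $A$ into $\begin{pmatrix}a&0\\0&M'\end{pmatrix}$. By Cauchy--Binet, multiplying by invertible matrices over a ring preserves the ideal of $r$-minors, so it suffices to compute the $r$-minors of this block-diagonal matrix. Those containing the first row and first column equal $a$ times an $(r-1)$-minor of $M'$. The other nonzero ones are $r$-minors of $M'$, which already lie in the ideal of $(r-1)$-minors (this is where $r\ge2$ is used). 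Since $a$ is a unit, the ideal is exactly $I_{r-1}(M')$. This ideal computation is the only real point, and Cauchy--Binet makes it routine. The main care needed is to state it at the level of ideals, not just closed points, since $b$-functions depend on the scheme structure.

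\textbf{The divisibility.} Bernstein--Sato polynomials restrict to open sets: $b_{\mathfrak a|_U}\mid b_{\mathfrak a}$ by \S\ref{Bernstein-Sato}. Remark~\ref{b-fun_var} says $b_Z$ depends only on the scheme $Z$ and is unchanged by a smooth factor $T$. Apply it with $T=\mathbb C^*\times\mathbb C^{m-1}\times\mathbb C^{n-1}$, which is smooth affine:
\[
b_{U\cap Z_r^{n,m}}=b_{Z_{r-1}^{n-1,m-1}},
\qquad\text{so}\qquad
b_{Z_{r-1}^{n-1,m-1}}\mid b_{Z_r^{n,m}}.
\]
The codimensions agree: on $U$ the ambient space is $X_{n,m}$ and the isomorphism multiplies both the ambient space and $Z_{r-1}^{n-1,m-1}$ by the same $T$. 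Hence undoing the shift $s\mapsto s-\mathrm{codim}$ uses the same constant on both sides, and we get $b_{I_{r-1}^{n-1,m-1}}(s)\mid b_{I_r^{n,m}}(s)$.
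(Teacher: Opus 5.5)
Your proposal is correct and is essentially the argument the paper relies on. The paper gives no separate proof; it invokes the pivot-$x_{11}$ elimination of \cite[Lemma 2.3]{Bernstein_for_maximal_minors} together with Remark~\ref{b-fun_var}, and your Schur-complement coordinates, your Cauchy--Binet check that $I_r^{n,m}R_{n,m}[x_{11}^{-1}]$ equals the ideal of $(r-1)$-minors of $M-a^{-1}wv$ at the level of ideals, and your codimension bookkeeping supply exactly the details left implicit there.
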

	By Lemma \ref{local_det} and a simple induction, we have:
	\begin{prop}\label{mn_over_r}
		Suppose $-\frac{mn}{r}$ is a root of $b_{I_{r}^{n,m}}$ for all $1\leq r \leq n\leq m$, then $$-\frac{(m-j)(n-j)}{r-j} ,\ 0\leq j \leq r-1 $$ are all roots of $b_{I_{r}^{n,m}}(s)$.
	\end{prop}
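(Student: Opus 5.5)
We argue by induction using Lemma~\ref{local_det}, which gives the divisibility $b_{I_{r-1}^{n-1,m-1}}(s)\mid b_{I_r^{n,m}}(s)$ whenever $r\geq 2$. Fix $1\leq r\leq n\leq m$ and $0\leq j\leq r-1$. The goal is to show that $-\frac{(m-j)(n-j)}{r-j}$ is a root of $b_{I_r^{n,m}}(s)$. The case $j=0$ is exactly the hypothesis applied to $(r,n,m)$.

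For $j\geq 1$, we iterate Lemma~\ref{local_det} $j$ times. At the $k$-th step, for $0\leq k<j$, the triple is $(r-k,n-k,m-k)$. Here $r-k\geq r-j+1\geq 2$, so the lemma applies. This produces the chain of divisibilities
\begin{displaymath}
b_{I_{r-j}^{n-j,m-j}}(s)\mid b_{I_{r-j+1}^{n-j+1,m-j+1}}(s)\mid\cdots\mid b_{I_r^{n,m}}(s).
\end{displaymath}
The triple $(r-j,n-j,m-j)$ still satisfies $1\leq r-j\leq n-j\leq m-j$, since $j\leq r-1$ and $r\leq n\leq m$. Hence the hypothesis applies to it, and $-\frac{(m-j)(n-j)}{r-j}$ is a root of $b_{I_{r-j}^{n-j,m-j}}(s)$. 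By the chain of divisibilities, it is therefore also a root of $b_{I_r^{n,m}}(s)$.

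The only points to check are the index bookkeeping and that the divisibility in Lemma~\ref{local_det} genuinely holds for the ideals themselves. The lemma is stated with this conclusion, and its justification goes through Remark~\ref{b-fun_var}. The isomorphism $\pi$ identifies $U\cap Z_r^{n,m}$ with a product of a smooth variety and $Z_{r-1}^{n-1,m-1}$, so $b_Z$ is the same for both. The codimensions agree, because $(n-r+1)(m-r+1)$ is invariant under $(r,n,m)\mapsto(r-1,n-1,m-1)$. Consequently the shift $b_{\mathfrak a}(s)=b_Z(s+\mathrm{codim})$ is the same on both sides. Finally, restriction to the open set $U$ yields a divisor of $b_{I_r^{n,m}}$. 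None of these steps is a real obstacle.
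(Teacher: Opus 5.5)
Your argument is correct and is essentially the paper's proof. The paper simply says it follows from Lemma~\ref{local_det} ``and a simple induction''; you spell this out by iterating the lemma $j$ times (valid since $r-k\geq 2$ for $k<j\leq r-1$) to get $b_{I_{r-j}^{n-j,m-j}}\mid b_{I_r^{n,m}}$, and then applying the hypothesis to the triple $(r-j,n-j,m-j)$.
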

	
	Similar results hold for Pfaffian ideals. Set $R'_N:=\mathbb C[z_{ij}]_{1\leq i<j\leq N}$.
	Let $W_r^N$ be the closed subvariety of $C_N:=\mathrm{Spec}\,R'_N$ defined by the
	complexification of $P_r^N$, and let $U'=\{z_{12}\neq0\}\subset C_N$.
	\begin{lemma}
		Suppose $r \geq 2$. There exists an isomorphism of algebraic varieties
		\begin{displaymath}
			\pi' : U' \cap W_{r}^N \overset{\simeq}{\longrightarrow} \mathbb C^* \times \mathbb C^{2(N-2)} \times W_{r-1}^{N-2}.
		\end{displaymath}
		In particular, since $\mathrm{codim}_{C_N} W_r^N = \mathrm{codim}_{C_{N-2}} W_{r-1}^{N-2}$, we have $b_{P_{r-1}^{N-2}}(s) \mid b_{P_r^N}(s)$.
	\end{lemma}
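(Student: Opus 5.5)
The plan is to follow the determinantal case, Lemma~\ref{local_det}, with the pivot $x_{11}$ replaced by the pivot $z_{12}$ of a skew-symmetric matrix.

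\emph{Step 1: a skew-symmetric change of coordinates on $U'$.} Write the generic skew-symmetric $N\times N$ matrix $Z=(z_{ij})$ in block form
\[
Z=\begin{pmatrix} A & B\\ -B^{T} & C\end{pmatrix},\qquad A=\begin{pmatrix}0&z_{12}\\-z_{12}&0\end{pmatrix},
\]
where $B$ is $2\times(N-2)$ and $C$ is skew-symmetric of size $N-2$. On $U'$ the block $A$ is invertible, and we have the congruence
\[
g^{T} Z g=\begin{pmatrix} A&0\\0& C+B^{T}A^{-1}B\end{pmatrix},\qquad g=\begin{pmatrix} I& -A^{-1}B\\0&I\end{pmatrix}.
\]
Here $C':=C+B^{T}A^{-1}B$ is again skew-symmetric, and its entries are regular functions on $U'$.

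\emph{Step 2: the isomorphism.} Define
\[
\Phi:U'\to \mathbb C^*\times\mathbb C^{2(N-2)}\times C_{N-2},\qquad Z\mapsto (z_{12},B,C').
\]
This map is an isomorphism of varieties, with inverse $(z_{12},B,C')\mapsto$ the matrix with $C=C'-B^{T}A^{-1}B$. Since $g\in\mathrm{GL}_N(\mathcal O(U'))$, congruence preserves rank, so $\mathrm{rk}\,Z=2+\mathrm{rk}\,C'$. The ideal $P_r^N$ cuts out the skew matrices of rank $<2r$, that is, of rank $\le 2r-2$ (ranks are even). So $Z\in W_r^N$ if and only if $\mathrm{rk}\,C'\le 2r-4$, which is exactly the condition $C'\in W_{r-1}^{N-2}$.

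To get a scheme-theoretic isomorphism rather than only a bijection on points, I need an equality of ideals: on $U'$,
\[
P_r^N\,\mathcal O(U')=\Phi^*P_{r-1}^{N-2}.
\]
I would prove this through the standard Pfaffian identity, which is the analogue of the minor identity used for determinants. It says that for index sets $I\supset\{1,2\}$, $\mathrm{Pf}(Z_I)=z_{12}\,\mathrm{Pf}(C'_{I\setminus\{1,2\}})$ up to sign. This can be checked via $\mathrm{Pf}(g^{T}Mg)=\det(g)\,\mathrm{Pf}(M)$ applied to principal submatrices. The identity shows that the $2r$-Pfaffians containing rows $1,2$ generate $\Phi^*P_{r-1}^{N-2}$ up to the unit $z_{12}$. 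It remains to show that the other $2r$-Pfaffians lie in this ideal. For that I would use the fact that $P_r^N$ is invariant under the congruence action of $\mathrm{GL}_N$: ideals of Pfaffians of a fixed size are preserved by $M\mapsto h^{T}Mh$. This invariance gives $P_r(Z)=P_r(g^{T}Zg)$ over $\mathcal O(U')$. The Pfaffians of the block-diagonal matrix $g^{T}Zg$ are then, up to sign, products of Pfaffians of its diagonal blocks, namely $z_{12}$ times $(2r-2)$-Pfaffians of $C'$, together with $2r$-Pfaffians of $C'$. The latter lie in $P_{r-1}(C')$ by Laplace expansion of Pfaffians. This equality of ideals is the main technical point.

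\emph{Step 3: the divisibility.} A skew $(N-2)$-matrix of rank $<2r-2$ imposes as many conditions inside $C_{N-2}$ as a skew $N$-matrix of rank $<2r$ does inside $C_N$. Concretely, the codimension of skew matrices of rank $\le 2k$ in $C_N$ is $\binom{N-2k}{2}$, which for $2k=2r-2$ is $\binom{N-2r+2}{2}$. This is the same value for $W_{r-1}^{N-2}$ in $C_{N-2}$, since $(N-2)-(2r-4)=N-2r+2$.

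Now apply Remark~\ref{b-fun_var}. First, $b_{P_r^N|_{U'}}(s)$ divides $b_{P_r^N}(s)$. Next, $b_{W_r^N\cap U'}=b_{\mathbb C^*\times\mathbb C^{2(N-2)}\times W_{r-1}^{N-2}}=b_{W_{r-1}^{N-2}}$, using that $\mathbb C^*\times\mathbb C^{2(N-2)}$ is a smooth affine variety. Finally, the shifts by codimension agree by the computation above. Together these give $b_{P_{r-1}^{N-2}}(s)\mid b_{P_r^N}(s)$.
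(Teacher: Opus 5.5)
Your proposal is correct and is essentially the argument the paper intends. The paper gives no separate proof here; it only says that the localization argument of \cite[Lemma 2.3]{Bernstein_for_maximal_minors} (invert the pivot, pass to the Schur complement, then apply Remark~\ref{b-fun_var}) carries over from the determinantal case, and you carry this out with pivot $z_{12}$. You also correctly supply the ideal-theoretic equality $P_r^N\,\mathcal O(U')=\Phi^*P_{r-1}^{N-2}$, via congruence-invariance of Pfaffian ideals, which is what the scheme-theoretic isomorphism requires.
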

	\begin{proposition}\label{redu_Pf}
		Suppose $-\frac{N(N-1)}{2r}$ is a root of $b_{P_r^N}(s)$ for all $1\leq 2r \leq N$, then
		\begin{displaymath}
			-\frac{(N-2j)(N-2j-1)}{2(r-j)},\ 0\leq j \leq r-1
		\end{displaymath}
		are all roots of $b_{P_{r}^N}(s)$.
	\end{proposition}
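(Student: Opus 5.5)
We argue by induction on $r$, using the Pfaffian analogue of Lemma~\ref{local_det} stated just above: for $r\geq 2$, $b_{P_{r-1}^{N-2}}(s)\mid b_{P_r^N}(s)$. The claim is that for each $0\leq j\leq r-1$ the number $-\frac{(N-2j)(N-2j-1)}{2(r-j)}$ is a root of $b_{P_r^N}(s)$. The case $j=0$ is exactly the hypothesis applied to $(N,r)$, which satisfies $2r\leq N$.

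For $j\geq 1$, set $N'=N-2j$ and $r'=r-j$. Then $r'\geq 1$ and $2r'=2r-2j\leq N-2j=N'$, so the pair $(N',r')$ lies in the range of the hypothesis. Hence $-\frac{N'(N'-1)}{2r'}$ is a root of $b_{P_{r'}^{N'}}(s)$. This number equals $-\frac{(N-2j)(N-2j-1)}{2(r-j)}$.

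Iterating the divisibility $j$ times gives the chain
\begin{displaymath}
b_{P_{r-j}^{N-2j}}(s)\mid b_{P_{r-j+1}^{N-2j+2}}(s)\mid\cdots\mid b_{P_r^N}(s).
\end{displaymath}
Each step applies the lemma to $(N-2k,\,r-k)$ with $0\leq k\leq j-1$. This is legitimate because $r-k\geq r-j+1\geq 2$ and $2(r-k)\leq N-2k$. A root of the smallest polynomial is therefore a root of $b_{P_r^N}(s)$, which proves the proposition.

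The divisibility itself is the content of the stated lemma, so nothing about it needs reproving. If one wanted to justify it, the route would be Remark~\ref{b-fun_var}. Restricting to the open set $U'$ gives that $b_{P_r^N|_{U'}}$ divides $b_{P_r^N}$. The isomorphism $\pi'$ together with the invariance $b_Z=b_{Z\times T}$ identifies $b_{W_r^N\cap U'}$ with $b_{W_{r-1}^{N-2}}$. Here the factor $\mathbb C^*\times\mathbb C^{2(N-2)}$ plays the role of $T$: it is a smooth affine variety, and $\mathbb C^*$ is affine. Finally, equality of the codimensions converts this identification into one between the ideal $b$-functions without any shift.

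The only point needing care is the bookkeeping of index ranges: checking that each intermediate pair satisfies both $r-k\geq 2$ and $2(r-k)\leq N-2k$. This is immediate from $2r\leq N$.
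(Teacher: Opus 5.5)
Your proposal is correct and is essentially the paper's argument. As with Proposition~\ref{mn_over_r}, the paper derives this from the divisibility $b_{P_{r-1}^{N-2}}(s)\mid b_{P_r^N}(s)$ by a simple induction, which is exactly your chain of divisibilities combined with the hypothesis applied to $(N-2j,\,r-j)$; your index checks ($r-k\geq 2$ and $2(r-k)\leq N-2k$) are the only bookkeeping needed, and they are right.
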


	\subs{\bf A characteristic $p$ criterion.} 
	\begin{definition}
		For ideals $\mathfrak c_p,J_p\subseteq \mathbb F_p[y_1,\ldots,y_q]$ with $\mathfrak c_p \subset \sqrt{J_p}\neq (1)$ and $e \in \mathbb Z_{> 0}$, we define
		\begin{displaymath}
			v_{\mathfrak c_p}^{J_p}(p^e) = \max\{r\in\mathbb Z_{\geq0} \mid \mathfrak c_p^r \not\subseteq J_p^{[p^e]}\}.
		\end{displaymath} 
	\end{definition}
	Applying the proof of \cite[Proposition 3.11]{MTW}, we deduce the following.
	\begin{proposition}\label{MTWprop}
		Let $\mathfrak c,J \subseteq \mathbb Z[y_1,\ldots,y_q]$ be non-zero ideals such that $\mathfrak c \subset \sqrt{J}$ and $J \cdot \mathbb Q[y_1,\ldots,y_q] \neq \mathbb Q[y_1,\ldots,y_q] $. Then for $p \gg 0$ and $e \in \mathbb Z_{>0}$, we have
		\begin{displaymath}
			b_{\mathfrak c}(v_{\mathfrak c}^J(p^e)) \equiv 0\ (\mathrm{mod}\, p),
		\end{displaymath}
		where $\mathfrak c,J$ are identified with their reductions in $\mathbb F_p[y_1,\ldots,y_q]$.
	\end{proposition}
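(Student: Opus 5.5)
The plan is to follow the proof of \cite[Proposition 3.11]{MTW}: specialize the functional equation \eqref{BS} at nonnegative integer values of $s_1,\ldots,s_r$, reduce it modulo $p$, and use the fact that Frobenius powers are stable under derivations in characteristic $p$.

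\textbf{Step 1: a functional equation over $\mathbb Z_{(p)}$.} Fix generators $f_1,\ldots,f_r\in\mathbb Z[y_1,\ldots,y_q]$ of $\mathfrak c$. Since the $f_i$ are defined over $\mathbb Q$, the Bernstein-Sato polynomial $b_{\mathfrak c}(s)$ has rational coefficients. Moreover, \eqref{BS} can be realized with operators $P_{\bm u}\in \mathscr D_{\mathbb Q^q}[s_1,\ldots,s_r]$, and only finitely many $\bm u$ occur. This is a linear-algebra statement, solvable over $\mathbb Q$ whenever it is solvable over $\mathbb C$. Clear denominators using finitely many primes. For $p\gg0$, all coefficients of $b_{\mathfrak c}$ and of the $P_{\bm u}$ (written in the basis $y^\alpha\partial^\beta s^\gamma$) then lie in $\mathbb Z_{(p)}$. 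Enlarging $p$ further, we may assume $J\cdot\mathbb F_p[y]\neq(1)$. This holds because $J\otimes\mathbb Q\neq(1)$: a Gröbner basis over $\mathbb Q$ of $J$ reduces well for almost all $p$. We may also assume $\mathfrak c^N\subseteq J$ for some fixed $N$ over $\mathbb Z[1/M]$, and so after reduction. Then $\mathfrak c^{Nq p^e}\subseteq (J^{N})^{\ldots}$, or more simply $\mathfrak c_p\subseteq\sqrt{J_p}$ gives $\mathfrak c_p^{k}\subseteq J_p^{[p^e]}$ for $k\gg0$. Thus $\nu:=v^{J}_{\mathfrak c}(p^e)$ is a well-defined nonnegative integer, since $\mathfrak c_p^0=(1)\not\subseteq J_p^{[p^e]}$.

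\textbf{Step 2: specialization.} By the definition of $\nu$, some product $f^{\bm a}=f_1^{a_1}\cdots f_r^{a_r}$ with $|\bm a|=\nu$ satisfies $f^{\bm a}\notin J_p^{[p^e]}$, because $\mathfrak c_p^\nu$ is generated by such products. Meanwhile $\mathfrak c_p^{\nu+1}\subseteq J_p^{[p^e]}$. Substitute $s_i=a_i$ in the functional equation. For $\bm u$ with $u_i<0$ and $a_i<-u_i$, the factor $\binom{a_i}{-u_i}$ vanishes. For the remaining $\bm u$ we have $\bm a+\bm u\in\mathbb Z_{\geq0}^r$ with $|\bm a+\bm u|=\nu+1$. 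Hence, over $\mathbb Z_{(p)}$ and then mod $p$,
\[
b_{\mathfrak c}(\nu)\, f^{\bm a}=\sum_{\bm u} P_{\bm u}(\bm a)\Bigl(\prod_{u_i<0}\binom{a_i}{-u_i}\Bigr) f^{\bm a+\bm u},\qquad f^{\bm a+\bm u}\in \mathfrak c_p^{\nu+1}\subseteq J_p^{[p^e]}.
\]
Here one checks that the specialized identity of functions is a genuine polynomial identity in $\mathbb Z_{(p)}[y]$. This follows by formal differentiation, since all exponents are nonnegative.

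\textbf{Step 3: conclusion.} In characteristic $p$, every $\partial_{y_j}$ kills $g^{p^e}$. Hence $\partial_{y_j}(g^{p^e}h)=g^{p^e}\partial_{y_j}h$, so $J_p^{[p^e]}$ is stable under $\mathbb F_p[y]\langle\partial\rangle$. The right-hand side therefore lies in $J_p^{[p^e]}$. Since $f^{\bm a}\notin J_p^{[p^e]}$ and $b_{\mathfrak c}(\nu)$ is a scalar in $\mathbb F_p$, we must have $b_{\mathfrak c}(\nu)\equiv0\pmod p$.

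The main obstacle is Step 1: making the reduction uniform in $p$. One must ensure that the choice of $p\gg0$ does not depend on $e$. It does not, because only the coefficients of $b_{\mathfrak c}$ and the $P_{\bm u}$, and the conditions on $J$, are involved.
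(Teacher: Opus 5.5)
Your proposal is correct and is the argument the paper invokes by citing the proof of \cite[Proposition 3.11]{MTW}. You specialize the functional equation \eqref{BS} at some $\bm a$ with $|\bm a|=\nu$ and $f^{\bm a}\notin J_p^{[p^e]}$, where the binomial factors kill exactly the terms with negative exponents. You then reduce modulo $p$ using operator coefficients in $\mathbb Z_{(p)}$, with $p\gg0$ chosen independently of $e$, and use that $J_p^{[p^e]}$ is stable under differential operators in characteristic $p$.

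The only blemish is the garbled aside in Step 1, which is unnecessary: $\mathfrak c^N\subseteq J$ already holds over $\mathbb Z$, so $\mathfrak c_p\subseteq\sqrt{J_p}$ follows directly.
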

	Proposition~\ref{MTWprop} immediately yields the following criterion.
	\begin{corollary}\label{key_criterion}
		With notation as in Proposition~\ref{MTWprop}, let $\alpha\in\mathbb Q$.
		Suppose that there are infinitely many primes $p$ such that
		$\alpha\equiv v_{\mathfrak c}^{J}(p)\ (\mathrm{mod}\,p)$.
		Then $\alpha$ is a root of $b_{\mathfrak c}(s)$.
	\end{corollary}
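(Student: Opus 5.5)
The plan is to reduce the statement to a divisibility argument for a single fixed rational number, using Proposition~\ref{MTWprop} with $e=1$. Write $\mathfrak c$ for the ideal over $\mathbb Z$ and also for its extension to $\mathbb C[y_1,\ldots,y_q]$, whose Bernstein-Sato polynomial is $b_{\mathfrak c}(s)$.

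First, record that $b_{\mathfrak c}(s)\in\mathbb Q[s]$. By \cite{BMS06} (Kashiwara's rationality theorem extended to ideals), the roots of $b_{\mathfrak c}$ are negative rational numbers. Since $b_{\mathfrak c}$ is monic, we can write $b_{\mathfrak c}(s)=\prod_i (s-\beta_i)$ with $\beta_i\in\mathbb Q$. Let $D$ be a common denominator of all the coefficients of $b_{\mathfrak c}$ and of $\alpha$. For every prime $p\nmid D$, both $b_{\mathfrak c}$ and $\alpha$ lie in $\mathbb Z_{(p)}$, so congruences modulo $p$ make sense for them. The hypothesis $\alpha\equiv v_{\mathfrak c}^J(p)\pmod p$ is understood in this sense: writing $\alpha=a/b$ with $p\nmid b$, it means $p\mid a-b\,v_{\mathfrak c}^J(p)$.

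Second, let $p_0$ be the threshold beyond which Proposition~\ref{MTWprop} holds. By hypothesis there are infinitely many primes $p>\max(p_0,D)$ with $\alpha\equiv v_{\mathfrak c}^J(p)\pmod p$. For each such prime, Proposition~\ref{MTWprop} with $e=1$ gives $b_{\mathfrak c}(v_{\mathfrak c}^J(p))\equiv 0\pmod p$. Since $b_{\mathfrak c}$ has $p$-integral coefficients, polynomial evaluation respects congruences in $\mathbb Z_{(p)}$. Hence
\[
b_{\mathfrak c}(\alpha)\equiv b_{\mathfrak c}(v_{\mathfrak c}^J(p))\equiv 0 \pmod{p\mathbb Z_{(p)}}.
\]

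Third, conclude. The quantity $b_{\mathfrak c}(\alpha)$ is a fixed rational number whose denominator divides a power of $D$. Its numerator is therefore divisible by infinitely many primes, which forces $b_{\mathfrak c}(\alpha)=0$. So $\alpha$ is a root of $b_{\mathfrak c}(s)$.

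The only point needing care, which I expect to be the main subtlety, is the first step. One must know that $b_{\mathfrak c}$ has rational coefficients, so that "mod $p$" is meaningful, and that the reduction used in Proposition~\ref{MTWprop} agrees with reducing these coefficients for large $p$. Rationality of the roots from \cite{BMS06} settles the first issue. The second is exactly how the congruence in Proposition~\ref{MTWprop} is to be read, after discarding the finitely many primes dividing $D$.
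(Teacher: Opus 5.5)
Your proposal is correct and is exactly the argument the paper leaves implicit when it says Proposition~\ref{MTWprop} ``immediately yields'' the corollary. You take $e=1$, use $p$-integrality of the rational coefficients of $b_{\mathfrak c}$ for $p\gg0$ to transfer the congruence from $v_{\mathfrak c}^J(p)$ to $\alpha$, and conclude that the fixed rational number $b_{\mathfrak c}(\alpha)$ is divisible by infinitely many primes, hence zero.
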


	\section{Proof of Theorem \ref{general_statement}}
	
	With notation as in Theorem~\ref{general_statement}, let
	$K=(y_i^{A_i}\mid i=1,\ldots,n)$, where $A_i>0$.
	We construct the vector $\bm A:=(A_i)_i$ as follows.
	
	Recall $L = \sum_{i=1}^{r'} \mathbb Z \cdot \mathrm{inv}(f_i)$ and set
	\begin{displaymath}
		S = \sum_{i=1}^{r'} \mathbb N \cdot \mathrm{inv}(f_i),\ C=\sum_{i=1}^{r'}\mathbb R_{\geq0}\cdot \mathrm{inv}(f_i), \text{ and } \overline S:=C\cap L.
	\end{displaymath}

	Then there exists $c_0\in S$ such that $c_0+\overline S\subset S$; see e.g. \cite[Theorem 1.4]{KK12}.
	
	Since $L$ has rank $n$, the cone $C$ is full-dimensional.
	The vector $\bm B:=\sum_{i=1}^{r'}\mathrm{inv}(f_i)$ lies in $\mathrm{int}(C)$.
	Hence, for a sufficiently large positive integer $M$, the vector
	$\bm A:=M\bm B+\bm1$ also lies in $\mathrm{int}(C)$, where
	$\bm1=(1,\ldots,1)\in\mathbb Z^n$. For every sufficiently large prime $p$, we then have
	$p\bm A-\bm1-c_0\in\mathrm{int}(C)$.
	Since $\mathrm{rank}\,L=n$, there exists $k\in\mathbb Z_{>0}$ such that
	$(k\mathbb Z)^n\subseteq L$, and hence $(kd\mathbb Z)^n\subseteq L$.
	For $p\equiv1\ (\mathrm{mod}\,kd)$, we obtain
	$p\bm A-\bm1=pM\bm B+(p-1)\bm1\in L$.
	Set $A_{\mathrm{tot}}:=\sum_{i=1}^{n}A_i$ and $B_{\mathrm{tot}} := \sum_{i=1}^n B_i = \sum_{i=1}^{r'} \deg \mathrm{in}(f_i) = r'd$. Then
	$pA_{\mathrm{tot}}-n=\sum_{i=1}^{n}(pA_i-1) = p(MB_{\mathrm{tot}}+n)-n = pMr'd +n(p-1)$ is divisible by $d$.

	By Dirichlet's theorem, there are infinitely many such primes.
	Since $r_p \equiv -n/d\, (\mathrm{mod}\, p)$ for $p> d$, it therefore suffices to show that $v_{\mathfrak a}^K(p)=r_p:=(pA_{\mathrm{tot}}-n)/d$
	for these primes. On the one hand, every generator has multiplicity at least $d$,
	so every monomial occurring in an element of $\mathfrak a^{r_p+1}$ has degree at least
	$d(r_p+1)=pA_{\mathrm{tot}}-n+d>pA_{\mathrm{tot}}-n$.
	Thus $\mathfrak a^{r_p+1}\subset K^{[p]}=(y_i^{pA_i}\mid i=1,\ldots,n)$.
	On the other hand, for every sufficiently large prime $p\equiv1\ (\mathrm{mod}\,kd)$,
	we have $p\bm A-\bm1\in c_0+\overline S\subset S$.
	Hence there exist $\lambda_1,\ldots,\lambda_{r'}\in\mathbb N$ such that
	$\mathrm{in}(f_1^{\lambda_1}\cdots f_{r'}^{\lambda_{r'}})
	=c\cdot y_1^{pA_1-1}\cdots y_n^{pA_n-1}\notin K^{[p]}$
	for some $c\neq0$.
	Since $\deg\mathrm{in}(f_i)=d$ for all $i=1,\ldots,r'$ and $K^{[p]}$ is a monomial ideal,
	we have $f_1^{\lambda_1}\cdots f_{r'}^{\lambda_{r'}}\in
	\mathfrak a^{r_p}\setminus K^{[p]}$, proving the assertion. $\hfill\Box$
	
	\section{Proofs of Theorems \ref{ThmA} and \ref{ThmB}}

	\subs{\bf Determinantal ideals.}
	Let $1\leq r\leq n\leq m$, let $R=\mathbb Z[x_{ij}]_{1\leq i\leq n,1\leq j\leq m}$,
	and let $\mathfrak a_r$ be the ideal generated by all $r\times r$ minors of $(x_{ij})_{i,j}$.
	The case of maximal minors, $r=n$, was settled in \cite{Bernstein_for_maximal_minors},
	where the entire polynomial $b_{\mathfrak a_r}(s)$ was determined.
	For $r=1$, \cite[Theorem 1.1]{Mus22} gives $b_{\mathfrak a_1}(s)=s+mn$.
	We therefore assume $2\leq r<n\leq m$ from now on.

	We define a monomial order on $R$ as follows. First, order the variables $\{x_{ij}\}_{i,j}$
	and let $\prec$ be the induced lexicographic monomial order.
	For a monomial $A=\prod_{i,j}x_{ij}^{b_{ij}}$, set $\deg_0 A:=\sum_{i=1}^n b_{ii}$.
	For two monomials $A_1,A_2$, declare $A_1<A_2$ if:
	\begin{enumerate}
		%\item $\deg A_1 < \deg A_2$, or
		\item $\deg_0 A_1 < \deg_0 A_2$, or
		\item $\deg_0 A_1 = \deg_0 A_2$, and $A_1 \prec A_2$.
	\end{enumerate}
	This defines a monomial order $<$ on $R$.

	For simplicity, we set $[n] = \{1,\ldots,n\}$ and $[m] = \{1,\ldots,m\}$. For $I_1\subseteq [n],I_2 \subseteq [m]$ with $\vert I_1\vert = \vert I_2\vert = c$, denote by $\det X_{I_1,I_2} = \det (x_{ij})_{i\in I_1,j\in I_2}$ the $c\times c$-minor determined by the rows indexed by $I_1$ and the columns indexed by $I_2$. Now for $I \subset [n]$ with $\vert I \vert = r$, we set $D_I := \det X_{I,I}$. For $J \subseteq [n]$ with $\vert J\vert = r-1$, $i \in [n]\setminus J$, and $j\in [m]\setminus (J\cup \{i\})$, we set $G_{ij}^J := \det X_{J\cup \{i\},J\cup \{j\}}$. Then we have
	\begin{displaymath}
		\mathrm{in}(D_I) = \prod_{l\in I} x_{ll}\text{ and } \mathrm{in}(G_{ij}^J) = \pm x_{ij} \prod_{l\in J} x_{ll}.
	\end{displaymath}
	Let $\{e_{ij}\}$ be the standard basis of $\mathbb Z^{n \times m}$. For $I \subset [n]$ with $\vert I \vert = r$, we set $d_I := \sum_{l\in I} e_{ll}$. For $J \subseteq [n]$ with $\vert J\vert = r-1$, $i \in [n]\setminus J$, and $j\in [m]\setminus (J\cup \{i\})$, we set $g_{ij}^J := e_{ij}+\sum_{l\in J} e_{ll}$. Then $\mathrm{inv}(D_I) = d_I$ and $\mathrm{inv}(G_{ij}^J) = g_{ij}^J$ for all $I,J,i,j$.

	Now take $\big(B,\mathfrak a,d,(f_1,\ldots,f_{r'})\big)$ in Theorem \ref{general_statement} to be $\big(R,\mathfrak a_r,r,(D_I,G_{ij}^J \mid I,J,i,j)\big)$. Then $L$ is the lattice generated by all $d_I,g_{ij}^J$.
	\begin{lemma}\label{ful_det}
		We have $L = \{\sum_{i,j} \lambda_{ij} e_{ij} \mid \lambda_{ij} \in \mathbb Z, \sum_{i,j} \lambda_{ij} \text{ is divisible by }r\}$. In particular, the rank of $L$ is $mn$.
	\end{lemma}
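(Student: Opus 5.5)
The inclusion $L\subseteq L_0:=\{\sum\lambda_{ij}e_{ij}\mid r \text{ divides } \sum\lambda_{ij}\}$ is immediate. Every generator $d_I$ and $g_{ij}^J$ has coordinate sum exactly $r$, and $L_0$ is a subgroup. For the reverse inclusion, I will show that $L$ contains the differences $e_{ij}-e_{i'j'}$ for all pairs of positions, together with one vector of coordinate sum $r$, say $d_{[r]}$. These vectors generate $L_0$: any element of $L_0$ equals $(\sum\lambda_{ij}/r)\,d_{[r]}$ plus a vector of coordinate sum zero, and the latter is an integer combination of the differences $e_{ij}-e_{11}$. Once $L=L_0$ is established, the rank statement follows, since $L_0$ has finite index $r$ in $\mathbb Z^{n\times m}$.

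The first step produces off-diagonal differences. Fix $J$ with $|J|=r-1$. For $i\notin J$ and $j\notin J\cup\{i\}$, subtracting two generators with the same $J$ gives $g_{ij}^J-g_{i'j'}^J=e_{ij}-e_{i'j'}$, valid whenever both are defined. To handle the diagonal, take $i\notin J$ and $I=J\cup\{i\}$; then $d_I-g_{ij}^J=e_{ii}-e_{ij}$ for any admissible $j$. Hence every $e_{ab}$ with $a\notin J$ and $b\notin J$ (the case $b=a$ included, using $d_I$) is congruent to a fixed one modulo $L$. The constraint $j\notin J\cup\{i\}$ prevents $j$ from equaling $i$, but $j=i$ is exactly the case supplied by $d_I$.

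The second step varies $J$ to connect all positions. Since $r<n$, we have $n-(r-1)\geq 2$ rows outside $J$. For a given position $(a,b)$, I choose $J$ avoiding both $a$ and $b$ (when $b\leq n$). This is possible when $n-2\geq r-1$, i.e.\ $r\leq n-1$, which holds. So every position lies in the "free block" of some $J$. Two different $J$'s must then be linked by a common position. Any two blocks, indexed by $J$ and $J'$, both contain some position $(a,b)$ with $a\notin J\cup J'$ and $b\notin J\cup J'$ as long as $|J\cup J'|$ is small. In general I pass through a chain $J=J_0,J_1,\dots,J_t=J'$ in which consecutive sets differ in one element. Then $|J_k\cup J_{k+1}|=r\leq n-1$, so a row $a$ outside the union exists, and I may take $b=a$ (a diagonal position) in both blocks. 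Along the chain all the congruence classes are identified, so all $e_{ab}$ are congruent modulo $L$.

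The main obstacle is positions $(a,b)$ with $b\leq n$ where both $a$ and $b$ must avoid $J$. This requires $r-1\leq n-2$, which is exactly the standing assumption $r<n$. The case $r=n$ was excluded earlier and genuinely behaves differently, since then $J$ cannot avoid two indices. Everything else is routine bookkeeping of indices, with the chain argument above supplying the connectedness.
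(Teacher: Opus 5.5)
Your proof is correct and is essentially the paper's argument: both show that all $e_{ab}$ coincide modulo $L$ by subtracting generators that share the same diagonal tail $\sum_{l\in J}e_{ll}$, with $r<n$ guaranteeing a $J$ that avoids two prescribed indices, and then use $d_{[r]}$ to get index exactly $r$. The paper works in the quotient $\mathbb Z^{n\times m}/L$ and needs no chain over the sets $J$: it identifies $\bar e_{ii}=\bar e_{jj}$ via $d_{I_0\cup\{i\}}-d_{I_0\cup\{j\}}$ for a single $I_0$ avoiding $i,j$, and then reads off $\bar e_{ij}=-(r-1)z=z$ from $g_{ij}^J$ using $rz=0$.
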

	\begin{proof}
		Let $
		L_0=\left\{
		\sum_{i,j}\lambda_{ij}e_{ij}
		\;\middle|\;
		\lambda_{ij}\in\mathbb Z,\quad
		r\text{ divides }\sum_{i,j}\lambda_{ij}
		\right\}.$ Since every $d_I$ and $g_{ij}^J$ belongs to $L_0$, we have
		$L\subseteq L_0$. Set $A_0:=\mathbb Z^{n\times m}/L$, then $\vert A_0\vert \geq |\mathbb Z^{n\times m}/L_0| = r$.
		To prove the reverse inclusion, it suffices to show that $A_0$
		is finite and $	\vert A_0\vert \leq r$.

		For distinct $i,j\in[n]$, choose
		$I_0\subseteq[n]\setminus\{i,j\}$ with $|I_0|=r-1$.
		Such a choice is possible since $r<n$. Then $
		e_{ii}-e_{jj}
		=d_{I_0\cup\{i\}}-d_{I_0\cup\{j\}}\in L.$ Thus all diagonal basis vectors have the same class $z\in A_0$.
		Since $d_{\{1,\ldots,r\}}\in L$, we have $rz=0\in A_0$. For $i\in[n]$ and $j\in[m]$ with $i\neq j$, choose
		$J\subseteq[n]\setminus\{i,j\}$ with $|J|=r-1$.
		Writing $\overline{e}_{ij}$ for the class of $e_{ij}$ in $A_0$,
		the relation $g_{ij}^J\in L$ gives
		\[
		0=\overline{e}_{ij}+(r-1)z
		=\overline{e}_{ij}-z.
		\]
		Hence every standard basis vector has class $z$, so $A_0$
		is cyclic and generated by $z$. Since $rz=0$, we obtain
		$|A_0|\leq r$, completing the proof.
	\end{proof}
	\subs{\bf Proof of Theorem \ref{ThmA}.}
	It suffices to verify the two conditions of Theorem~\ref{general_statement}.
	Condition~\eqref{cA} holds because all $r\times r$ minors are homogeneous of degree $r$,
	and condition~\eqref{cB} follows from Lemma~\ref{ful_det}. $\hfill\Box$

	\subs{\bf Pfaffian ideals.}
	Let $1\leq2r\leq N$, and set
	$R':=\mathbb Z[z_{ij}]_{1\leq i,j\leq N}/(z_{ij}+z_{ji},z_{ii}\mid 1\leq  i,j \leq N)
	=\mathbb Z[z_{ij}]_{1\leq i<j\leq N}$.
	Let $P_r^N$ be the ideal generated by all $2r\times2r$ Pfaffians of $(z_{ij})_{i,j}$.
	We prove the strong monodromy conjecture for $P_r^N$.

	For $r=1$, \cite{Mus22} gives $b_{P_1^N}(s)=s+N(N-1)/2$, which proves the conjecture
	in this case. When $N$ is odd and $r=\lfloor N/2\rfloor$, the result follows from
	\cite{Bernstein_for_maximal_minors}. When $N$ is even and $r=N/2$, the conjecture follows
	from $b_{P_r^{2r}}(s)=(s+1)(s+3)\cdots(s+2r-1)$; see \cite[\S 9]{CSS13}.
	Note that the polynomial $b(s)$ in \cite{CSS13} equals $b_{P_r^{2r}}(s-1)$.

	We may therefore assume $2\leq r<\lfloor N/2\rfloor$.
	By \cite{CSXZ25}, the poles of $Z_{P_r^N}^{\mathrm{mot}}(s)$ are
	\begin{displaymath}
		-\frac{N(N-1)}{2r}, -\frac{(N-2)(N-3)}{2(r-1)},\ldots,-\frac{(N-2r+2)(N-2r+1)}{2}.
	\end{displaymath}
	By Proposition~\ref{redu_Pf}, it remains to show that
	$-\frac{N(N-1)}{2r}$ is a root of $b_{P_r^N}(s)$.
	
	Set $h:=\lfloor N/2\rfloor$ and $T:=\{1,3,\ldots,2h-1\}$.
	We distinguish the pairwise disjoint pairs $\{a,a+1\}$ for $a\in T$.
	For a monomial $B=\prod_{i<j}z_{ij}^{b_{ij}}$, define
	\[
	\deg'_0 B:=\sum_{a\in T}b_{a,a+1}.
	\]
	
%	For a monomial $B = \prod_{i<j} z_{ij}^{b_{ij}}$,  we define
%	\begin{displaymath}
%		\deg_{0}' B := \sum_{i=1}^{N-1} b_{i,i+1}.
%	\end{displaymath}

	Fix an arbitrary monomial order $\prec$ on $R'$.
	For two monomials $A_1,A_2$ in $R'$, declare $A_1<A_2$ if:
	\begin{enumerate}
		\item $\deg_0' A_1 < \deg_0' A_2$, or
		\item $\deg_0' A_1 = \deg_0' A_2$, and $A_1 \prec A_2$.
	\end{enumerate}
	This defines a monomial order $<$ on $R'$.

	For $U\subseteq T$, define $U+1:=\{a+1\mid a\in U\}$.
	For $I\subseteq T$ with $|I|=r$, define
	\[
	H_I:=\mathrm{Pf}(z_{ij})_{i,j\in I\cup(I+1)}.
	\]
	For $J\subseteq T$ with $|J|=r-1$ and indices $i<j$
	outside $J\cup(J+1)$, define
	\[
	Q_{ij}^J:=\mathrm{Pf}(z_{kl})_{k,l\in J\cup(J+1)\cup\{i,j\}}.
	\]
	Then
	\[
	\mathrm{in}(H_I)=\prod_{a\in I}z_{a,a+1}
	\quad\text{and}\quad
	\mathrm{in}(Q_{ij}^J)=z_{ij}\prod_{a\in J}z_{a,a+1}.
	\]

	Let $\{\tilde e_{ij}\}_{1\leq i<j\leq N}$ be the standard basis of
	$\mathbb Z^{N(N-1)/2}$. For $I,J,i,j$ as above, set
	$h_I:=\sum_{l\in I}\tilde e_{l,l+1}$ and
	$q_{ij}^J:=\tilde e_{ij}+\sum_{l\in J}\tilde e_{l,l+1}$.

	Now take $\big(B,\mathfrak a,d,(f_1,\ldots,f_{r'})\big)$ in Theorem \ref{general_statement} to be $\big(R',P_r^N,r,(H_I,Q_{ij}^J \mid I,J,i,j)\big)$. Then $L$ is the lattice generated by all $h_I,q_{ij}^J$.
	We have the following analogue of Lemma~\ref{ful_det}.
	\begin{lemma}\label{ful_Pf}
		$L = \{\sum_{i<j} \lambda_{ij} \tilde e_{ij} \mid \lambda_{ij} \in \mathbb Z,\ r\text{ divides }\sum_{i<j} \lambda_{ij}\}$. In particular, the rank of $L$ is $N(N-1)/2$.
	\end{lemma}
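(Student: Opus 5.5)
I will mirror the proof of Lemma~\ref{ful_det}. Let
\[
L_0=\Bigl\{\sum_{i<j}\lambda_{ij}\tilde e_{ij}\;\Bigm|\;\lambda_{ij}\in\mathbb Z,\ r\mid\sum_{i<j}\lambda_{ij}\Bigr\}.
\]
Each $h_I$ and $q_{ij}^J$ has coordinate sum $r$, so $L\subseteq L_0$. Set $A_0:=\mathbb Z^{N(N-1)/2}/L$. Then $|A_0|\geq|\mathbb Z^{N(N-1)/2}/L_0|=r$. It therefore suffices to show that $A_0$ is cyclic, generated by a class $z$ with $rz=0$.

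\emph{Step 1: the distinguished pairs.} For distinct $a,b\in T$, I choose $I_0\subseteq T\setminus\{a,b\}$ with $|I_0|=r-1$. This is possible because $|T|=h>r$ by the standing assumption $r<\lfloor N/2\rfloor$. Then
\[
\tilde e_{a,a+1}-\tilde e_{b,b+1}=h_{I_0\cup\{a\}}-h_{I_0\cup\{b\}}\in L.
\]
Hence all $\tilde e_{a,a+1}$ with $a\in T$ have a common class $z\in A_0$. Since $h_I\in L$ for any $I\subseteq T$ with $|I|=r$, we get $rz=0$.

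\emph{Step 2: the remaining coordinates.} Take any pair $i<j$. I want $J\subseteq T$ with $|J|=r-1$ and $\{i,j\}\cap(J\cup(J+1))=\emptyset$. The indices $i$ and $j$ meet at most two of the blocks $\{a,a+1\}$, $a\in T$. So it is enough to have $h-2\geq r-1$, i.e.\ $h\geq r+1$, which again holds. Then $q_{ij}^J\in L$ gives
\[
0=\overline{\tilde e}_{ij}+(r-1)z=\overline{\tilde e}_{ij}-z.
\]
The case where $\{i,j\}$ is itself a distinguished pair $\{a,a+1\}$ is consistent with this and is already covered by Step~1. Thus every basis vector has class $z$, so $A_0$ is cyclic of order at most $r$. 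Hence $L=L_0$, and since $L_0$ has finite index in $\mathbb Z^{N(N-1)/2}$, its rank is $N(N-1)/2$.

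\emph{Expected obstacle.} The only delicate point is the counting in Step~2, namely that $J$ can avoid both $i$ and $j$ while $Q_{ij}^J$ stays well defined. When $N$ is odd, the index $N$ lies in no block, which only helps. So the inequality $h\geq r+1$, guaranteed by $r<\lfloor N/2\rfloor$, suffices in all cases.
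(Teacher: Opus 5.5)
Your proof is correct and is essentially the paper's argument: the paper states this lemma only as "the following analogue of Lemma~\ref{ful_det}" without writing out a proof. Your adaptation supplies exactly that analogue. You use $h=|T|\geq r+1$ to choose $I_0$ and $J$ avoiding the blocks $\{a,a+1\}$ that meet $a$, $b$, $i$, $j$, conclude that every basis vector has the class $z$ with $rz=0$, and finish with the index comparison against $L_0$.
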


	\subs{\bf Proof of Theorem \ref{ThmB}.}
	By Proposition~\ref{mn_over_r}, it suffices to verify the two conditions of Theorem~\ref{general_statement}.
	Condition~\eqref{cA} holds because all $2r\times2r$ Pfaffians are homogeneous of degree $r$,
	and condition~\eqref{cB} follows from Lemma~\ref{ful_Pf}. $\hfill\Box$
	
	%\section{Other applications}

	%	


\begin{thebibliography}{BBNV25 } 
		%		
		%		%\bibitem[AC75]{AC} N. A’Campo. \newblock{La fonction z\^eta d’une monodromie.} {\em Comment. Math. Helv.} 50: 233-248, 1975.
		%		
		%		\bibitem[ACLM05]{Artal} E. Artal Bartolo, P. Cassou-Noguès, I. Luengo, A. Melle Hernández. Quasi-ordinary power series and their zeta functions. Mem. Amer. Math. Soc. 178, vi+85 pp. (2005). 
		%		
		%		\bibitem[BBNV25]{Birational zeta function}
		%		T. Biesbrouck, N. Budur, J. Nicaise,  W. Veys.
		%		\newblock Birational zeta functions.
		%		\newblock{arxiv:2509.03352}.
		%		
		%		
		%		\bibitem[BVVW24]{BVVW24}
		%		N. Budur, R. van der Veer, and A. Van Werde.
		%		Estimates for zero loci of Bernstein-Sato ideals. 
		%		\newblock{\em Publ. Res. Inst. Math. Sci.}, 60, 1-22 (2024).
		
		%		%\bibitem[Bla22]{Bla22} G. Blanco.	\newblock{An algorithm for Hodge ideals}. {\tt hodge.lib}.		\newblock{\em Math. Comp.}, 338: 2955--2967.
		%		
		%		\bibitem[Bla24]{Bla24}
		%		G. Blanco.
		%		Topological roots of the Bernstein-Sato polynomial of plane curves.
		%		\newblock{arXiv:2406.09034.}
		%		\bibitem[BK86]{BK} E. Brieskorn, H. Kn\"orrer. 
		%		{Plane algebraic curves.} Birkh\"auser/Springer,  x+721 pp. (1986).
		%		
		%		\bibitem[B03]{Bu} N. Budur. On Hodge spectrum and multiplier ideals.  Math. Ann.  327,  257-270 (2003). 
		%		
		%		
		%		\bibitem[B12]{Bsurvey} N. Budur. Singularity invariants related to Milnor fibers: survey. Contemp. Math. 566, 161-187 (2012).
		%		
		%		\bibitem[BGG12]{BGV}	N. Budur, P. González-Pérez, M. González Villa. 
		%		Log canonical thresholds of quasi-ordinary hypersurface singularities.
		%		Proc. Amer. Math. Soc. 140, 4075-4083 (2012).
		%		
		%		\bibitem[B+24]{BBPP24}
		%		N. Budur, J. de la Bodega, E. de Lorenzo Poza, J. Fern\'andez de Bobadilla, T. Pe\l ka.
		%		\newblock{On the embedded Nash problem}.
		%		\newblock{Forum Math. Pi} 12, Paper No. e15, 28 pp. (2024).
		
		%		\bibitem[BPSZ26]{BPSZ} N. Budur, E. de Lorenzo Poza, Q. Shi, and H. Zuo. \newblock{A remarkable subset of poles of the motivic zeta function}. \newblock{arXiv:2602.13508.}
		
		  \bibitem[BMS06]{BMS06}
		N.~Budur, M.~Musta\c{t}\u{a}, and M.~Saito.
		\newblock Bernstein-{S}ato polynomials of arbitrary varieties.
		\newblock {\em Compos. Math.}, 142(3): 779--797, 2006.
		
		%		
		%		
		%		\bibitem[BFLN22]{Cohomology_of_Contact_Loci}
		%		N. Budur, J.~Fern\'andez de~Bobadilla, Q.T. L\^e,  H.D.
		%		Nguyen.
		%		\newblock Cohomology of contact loci.
		%		\newblock {J. Differential Geom.} 120, 389-409 (2022).
		%		
		%		\bibitem[BPSZ26]{BPSZ} N. Budur, E. de Lorenzo Poza, Q. Shi,  H. Zuo. \newblock{A remarkable subset of poles of the motivic zeta function}. \newblock{ arXiv:2411.01305.}
		%		
		%		\bibitem[BSZ24]{BSZ} N. Budur, Q. Shi,  H. Zuo. \newblock{Motivic principal value integrals for hyperplane arrangements}. \newblock{ arXiv:2411.01305.}
		%		
		%
		%		
		%		\bibitem[BN20]{BN20}
		%		E. Bultot, J. Nicaise.
		%		\newblock{ Computing motivic zeta functions on log smooth models.}
		%		\newblock{ Math. Zeit.} 295, 427-462 (2020).
		%		
		%		\bibitem[BL23]{BL}
		%		J. de la Bodega, E. de Lorenzo Poza.
		%		\newblock{The arc-Floer conjecture for plane curves}.
		%		\newblock{arXiv:2308.00051}. To appear in J. Differential Geom.
		%		
		
				\bibitem[CSS13]{CSS13}
				S. Caracciolo, A.~D. Sokal, and A. Sportiello.
				\newblock Algebraic/combinatorial proofs of Cayley-type identities for derivatives of determinants and Pfaffians.
				\newblock{\em Adv. in Appl. Math.}, 50(4), 474--594, 2013.
		
				\bibitem[CZ25]{CZ25}
				Y. Chen and H. Zuo.
				\newblock{On the monodromy conjecture for determinantal varieties}.
				\newblock{\em \url{https://arxiv.org/abs/2510.11425}}, 2025.
				
				\bibitem[CSXZ25]{CSXZ25}
				Y. Chen, Q. Shi, Y. Xu, and H. Zuo.
				\newblock{On the monodromy conjecture, holomorphy conjecture, and embedded Nash problem for Pfaffian ideals.}
				\newblock{\url{https://arxiv.org/abs/2511.01262}}, 2025.
				
				
				\bibitem[DL98]{DL98}
				J. Denef and F. Loeser.
				\newblock Motivic {I}gusa zeta functions.
				\newblock { J. Algebraic Geom.} 7, 505-537, 1998.

		%		
		%		%\bibitem[DL99]{DL99} J. ~Denef,  F. ~Loeser. \newblock {Germs of arcs on singular algebraic varieties and motivic integration}.	\newblock {\em Invent. Math.}, 135(1):201--232, 1999.
		%		
				\bibitem[Doc13]{Roi}
				R. Docampo. 
				\newblock{Arcs on determinantal varieties.}
				\newblock{\em Trans. Amer. Math. Soc.}, 365: 2241-2269, 2013.
				
		%		\bibitem[ELM04]{ELM04}
		%		L. Ein, R. Lazarsfeld, M. Musta\c t\u a.
		%		\newblock{ Contact loci in arc spaces.}
		%		\newblock{ Compos. Math.} 140, 1229-1244 (2004).
		%		
		%		
		%		
		%		\bibitem[FP24]{BP} J. Fern\'andez de Bobadilla, T. Pe\l ka. Symplectic monodromy at radius zero and equimultiplicity of $\mu$-constant families. { Ann. Math.} 200, 153-299 (2024).
		%		
		%		
		%		\bibitem[Fi12]{Fic} G. Fichou. The motivic real Milnor fibres. Manuscripta Math. 139, 167–178 (2012).
		%		
		%		
		%		
		%		\bibitem[GG14]{GV}  P. González Pérez, M. González Villa.
		%		Motivic Milnor fiber of a quasi-ordinary hypersurface. J. Reine Angew. Math. 687, 159-205 (2014).		
		%		
		%		\bibitem[GR25]{GP} P. González Pérez, M. Robredo Buces. Quasi-ordinary hypersurfaces, multiplier ideals and local tropicalizations. arXiv:2510.21009.
		%		
		%		
		%		\bibitem[I18]{Ish14}
		%		S. Ishii.
		%		\newblock{Introduction to singularities}.
		%		\newblock Springer, 2018, x+236 pp.
		%		
		%		\bibitem[KT19]{KT}
		%		M. Kontsevich, Y. Tschinkel.
		%		\newblock{Specialization of birational types.}
		%		\newblock{ Invent. Math.} 217, 415-432 (2019).
		%		
		%		
		%		\bibitem[Li89]{Li} B. Lichtin. Poles of $|f(z,w)|^{2s}$ and roots of the $b$-function. Ark. Mat. 27, 283-304 (1989).
		%		
		%		\bibitem[Lo88]{L} F. Loeser. Fonctions d'Igusa $p$-adiques et polyn\^{o}mes de
		%		Bernstein. Amer. J. Math. 110, 1-21 (1988).
		%		
		%		
		
		\bibitem[KK12]{KK12}
		K. Kaveh and A. G. Khovanskii.
		\newblock Newton--Okounkov bodies, semigroups of integral points, graded algebras and intersection theory.
		\newblock {\em Ann. of Math. (2)}, 176(2): 925--978, 2012.

		\bibitem[LRWW17]{Bernstein_for_maximal_minors}
		A. C. Lőrincz, C. Raicu, U. Walther, and J. Weyman.
		\newblock{Bernstein–Sato polynomials for maximal minors and sub-maximal Pfaffians}.
		\newblock{\em Adv. Math.}, 307: 224-252, 2017.
		%		
		%		
		%		\bibitem[Ma20]{Man} F. Mangolte. Real algebraic varieties.
		%		Springer, 2020, xviii+444 pp.	
		%		
		%		\bibitem[McCP03]{McP} C. McCrory, A. Parusi\'nski. Virtual Betti numbers of real algebraic varieties. C. R. Math. Acad. Sci. Paris 336, 763–768 (2003).
		%		
		%		\bibitem[McL19]{McL19}
		%		M. McLean.
		%		\newblock Floer cohomology, multiplicity and the log canonical threshold.
		%		\newblock { Geom. Topol.} 23, 957-1056 (2019).	
		%		
		%		
		%		
		%		
		%		
		%		\bibitem[Mu02]{Mus02}
		%		M. Musta\c t\u a.
		%		\newblock{ Singularities of pairs via jet schemes.}
		%		\newblock{ J. Amer. Math. Soc.} 15, 599-615 (2002).
		
		\bibitem[Mus22]{Mus22}
		M.~Musta\c{t}\u{a}.
		\newblock Bernstein-{S}ato polynomials for general ideals vs. principal ideals.
		\newblock {\em Proc. Amer. Math. Soc.}, 150(9): 3655--3662, 2022.
		
		
		\bibitem[MTW05]{MTW}
		M. Musta\c t\u a, S. Takagi, and K. Watanabe, F-thresholds and Bernstein-Sato polynomials, in {\it European Congress of Mathematics}, Eur. Math. Soc., Z\"urich, 341--364, 2005.
		
		
		%		
		%		\bibitem[NX16]{NX16}
		%		J. Nicaise, C. Xu.
		%		\newblock Poles of maximal order of motivic zeta functions.
		%		Duke Math. J. 165, 217-243 (2016).
		%		
		%		\bibitem[RV03]{RV} B. Rodrigues and W. Veys.
		%		Poles of zeta functions on normal surfaces.
		%		{ Proc. London Math. Soc.} 87, 164-196, 2003.
		
		%		
		%		\bibitem[S00]{Sexp} M. Saito. Exponents of an irreducible plane curve singularity. arXiv:math/0009133.
		%		
		%		
		%		\bibitem[S07]{S-real} M. Saito.  On real log canonical thresholds. arXiv:0707.2308.
		%		
		%		
		%		\bibitem[Va76]{Var76}
		%		A. Varchenko.
		%		\newblock Zeta-function of monodromy and Newton's diagram.
		%		\newblock{ Invent. Math.} 37, 253-262 (1976).
		%		
		%		\bibitem[Va82]{Va} A. Varchenko. The complex singularity index does not change along the stratum $\mu=$const. Funct. Anal. Appl. 16, 1-9  (1982).		
		%		
		
		%		\bibitem[Vey90]{Vey90}
		%		W. Veys.
		%		\newblock On the poles of Igusa’s local zeta function for curves.
		%		\newblock{J. Lond. Math. Soc. (2)}, 41(1), 27-32 (1990). 
		
		%		\bibitem[Ve95]{Ve} W. Veys. Determination of the poles of the topological zeta function for curves. Manuscripta
		%		Math 87, 435-448 (1995).		
		%		
		
		\bibitem[VPV10]{PV10}
		L. Van Proeyen and W. Veys. 
		\newblock{The monodromy conjecture for zeta functions associated to ideals in dimension two}.
		\newblock{\em Ann. Inst. Fourier (Grenoble)}, 60(4): 1347-1362, 2010.
		
	\end{thebibliography}
\end{document}